
\documentclass[12pt]{amsart}
\hoffset=-1.4cm
\usepackage{amssymb}
\usepackage[all]{xy}
\usepackage[raiselinks=false,colorlinks=true,citecolor=blue,urlcolor=blue,linkcolor=blue,bookmarksopen=true,pdftex]{hyperref}
\pagestyle{headings}
\newtheorem{theorem}{Theorem}[section]
\newtheorem{lemma}[theorem]{Lemma}
\theoremstyle{definition}
\newtheorem{definition}[theorem]{Definition}

\newtheorem{proposition}[theorem]{Proposition}

\newtheorem{remark}[theorem]{Remarks}
\def\aut{\operatorname{Aut}}
\def\out{\operatorname{Out}}
\def\homeo{\operatorname{Homeo}}
\def\supp{\textrm{supp}}
\def\fix{\textrm{Fix}}
\begin{document}

\title[Twisted conjugacy in R. Thompson's group $T$] {Twisted conjugacy in  Richard Thompson's group  $T$}
\author[ D. L. Gon\c{c}alves] {Daciberg L. Gon\c{c}alves}
\address{Deptametno de Matem\'atica - IME, Universidade de S\~ao Paulo\\ Caixa Postal 66.281 - CEP 05314-970, S\~ao Paulo - SP, Brasil}
\email{dlgoncal@ime.usp.br}
\author[P. Sankaran]{Parameswaran Sankaran}
\address{The Institute of Mathematical Sciences\\
CIT Campus, Taramani,\\ 
Chennai 600113, India}
\email{sankaran@imsc.res.in}

\subjclass[2010]{20E45, 20E36.\\
Keywords and phrases: R. Thompson's groups, twisted conjugacy, Reidemeister number, $R_\infty$-property, homeomorphism groups.}

\maketitle
\noindent
{\bf Abstract} {\it Let $\phi:\Gamma\to \Gamma$ be an automorphism of a group $\Gamma$.  We say that 
$x,y\in \Gamma$ are in the same $\phi$-twisted conjugacy class and write $x\sim_\phi y$ if there exists an element $\gamma\in \Gamma$ such that $y=\gamma x\phi(\gamma^{-1})$.  This is an equivalence relation on $\Gamma$ and is called the $\phi$-twisted conjugacy.  Let $R(\phi)$ denote the number of  $\phi$-twisted conjugacy classes in $\Gamma$.  If $R(\phi)$ is infinite for all $\phi\in \aut(\Gamma)$, we say that $\Gamma$ has the $R_\infty$-property.    The  purpose of this note is to show that the Richard Thompson group $T$ has the $R_\infty$ property.}

\section{Introduction} 

Let $\Gamma$ be a group and let $\phi:\Gamma\to \Gamma$ be an endomorphism.  Then $\phi$ determines an action $\Phi$ of 
$\Gamma$ on itself where, for  $\gamma\in \Gamma$ and $x\in \Gamma$, we have $\Phi_\gamma(x)=\gamma x \phi(\gamma^{-1})$.  The orbits of this action are called the $\phi$-twisted conjugacy classes. Note that when $\phi$ is the identity automorphism, the orbits are the usual conjugacy classes of $\Gamma$.   We denote by $\mathcal{R}(\phi)$ the set of all $\phi$-twisted conjugacy classes and by $R(\phi)$ the cardinality $\#\mathcal{R}(\phi)$ of $\mathcal{R}(\phi)$.   
We say that $\Gamma$ has the $R_\infty$-property if $R(\phi)=\infty,$ that is if $\mathcal{R}(\phi)$ is infinite, for every automorphism $\phi$ of $\Gamma$.   

The problem of determining which groups have 
the $R_\infty$-property---more briefly the $R_\infty$-problem---has attracted the attention of many researchers 
after it was discovered that all non-elementary Gromov-hyperbolic groups have the $R_\infty$-property.  See \cite{ll} and \cite{felshtyn}.  It is particularly interesting when the 
group in question is finitely generated or countable. 
The notion of twisted conjugacy arises naturally in fixed point theory, representation theory, algebraic geometry and number theory.    In the recent years the $R_\infty$-problem has emerged as an active research area. 
The  problem is particularly interesting because there does not seem 
to be a uniform approach to its resolution. 
A variety of techniques and ad hoc arguments from several branches of mathematics have been used in the solve this problem depending on the group under consideration.  These include (but not restricted to) combinatorial group theory, 
geometric group theory, homological algebra, $C^*$-algebras, and algebraic groups.

Recall that Richard Thompson introduced three groups 
$F, T, $ and $V$ in 1965 in an unpublished hand-written 
manuscript.    The group $T$ is the first example of a finitely 
presented infinite simple group.  
In this note we give an elementary proof that $T$ is an $R_\infty$-group. 
\begin{theorem}\label{main}
The Richard Thompson group $T$ has the $R_\infty$-property.
\end{theorem}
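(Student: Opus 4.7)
\noindent\emph{Proof plan.}  The plan is to split the analysis along the outer-automorphism class of $\phi$.  A standard verification gives $R(\phi)=R(\psi)$ whenever $\psi$ and $\phi$ differ by an inner automorphism, so $R(\phi)$ depends only on the class of $\phi$ in $\out(T)$.  Using the known identification $\out(T)\cong\mathbb{Z}/2\mathbb{Z}$ (due to Brin), with the non-trivial class represented by $\phi_0(f)=FfF^{-1}$ where $F(x)=-x$ is the orientation-reversing flip of $S^1$, it suffices to prove $R(\mathrm{id}_T)=\infty$ and $R(\phi_0)=\infty$.

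For $\phi=\mathrm{id}_T$, $R(\mathrm{id}_T)$ equals the number of ordinary conjugacy classes of $T$.  The rotation number $\rho:T\to\mathbb{Q}/\mathbb{Z}$ is a conjugation invariant, and the dyadic rotations $r_{1/2^n}(x)=x+1/2^n$ all lie in $T$ with pairwise distinct rotation numbers, so $R(\mathrm{id}_T)=\infty$.

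For $\phi=\phi_0$, the defining relation gives
\[
y=\gamma\, x\,\phi_0(\gamma^{-1})\ \Longleftrightarrow\ yF=\gamma\,(xF)\,\gamma^{-1},
\]
so $\phi_0$-twisted conjugacy classes of $T$ correspond bijectively to $T$-conjugation orbits on the coset $TF\subset\homeo(S^1)$ of orientation-reversing PL homeomorphisms.  I will construct a sequence $(f_n)_{n\ge 1}$ in $T$ whose images $h_n=f_nF$ lie in pairwise distinct $T$-orbits.  Each $h\in TF$ has a non-empty finite fixed-point set $\fix(h)$ (the Lefschetz number of an orientation-reversing self-map of $S^1$ is $2$), and at every $p\in\fix(h)$ the one-sided derivatives $h'(p^\pm)$ are negative powers of $2$.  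A chain-rule computation, using that every $\gamma\in T$ is orientation-preserving, shows the multiset
\[
\Sigma(h)=\bigl\{\bigl(h'(p^-),\,h'(p^+)\bigr):p\in\fix(h)\bigr\}
\]
is a $T$-conjugation invariant on $TF$.

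To finish I take $f_n=x_1^n\in F\le T$, where $x_1$ is the standard second generator of Thompson's group $F$, supported on $[1/2,1]$, so that $f_n$ is the identity on $[0,1/2]$ and satisfies $f_n'(1^-)=2^n$ and $f_n'((1/2)^+)=2^{-n}$.  A direct calculation then gives $\fix(h_n)=\{0,1/2\}$ with germ data $(-1,-2^n)$ at $0$ and $(-2^{-n},-1)$ at $1/2$, so the invariants $\Sigma(h_n)$ are pairwise distinct for $n\ge 1$, proving $R(\phi_0)=\infty$.  The main obstacle is choosing an invariant strong enough: the obvious global candidate $\rho(f\phi_0(f))=\rho((fF)^2)$ always vanishes, because every $fF$ already has a fixed point, so one has to descend to the finer local germ data at individual fixed points rather than relying on a single rotation-number-type class function.
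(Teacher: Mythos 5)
Your overall strategy---reduce to one representative of each class of $\out(T)\cong\mathbb{Z}/2\mathbb{Z}$ via Brin's theorem, then treat the identity and the flip separately---is exactly the paper's skeleton, and your treatment of the identity class (rotation number evaluated on the dyadic rotations) is correct and arguably cleaner than the paper's count of components of support. The reformulation of $\phi_0$-twisted conjugacy as $T$-conjugation on the coset of orientation-reversing maps is also fine. The genuine gap is the claimed invariance of $\Sigma(h)$. If $k=\gamma h\gamma^{-1}$ with $\gamma\in T$ and $q=\gamma(p)$ for $p\in\fix(h)$, then because $h$ \emph{reverses} orientation the chain rule gives $k'(q^{+})=\gamma'(p^{-})\,h'(p^{+})/\gamma'(p^{+})$ and $k'(q^{-})=\gamma'(p^{+})\,h'(p^{-})/\gamma'(p^{-})$. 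A conjugator $\gamma\in T$ may well have a breakpoint at $p$ (your fixed points $0$ and $1/2$ are dyadic, so this really happens: take $\gamma=A$, which fixes $0$ with one-sided slopes $2$ and $1/2$ there), in which case $\gamma'(p^{-})\neq\gamma'(p^{+})$ and the ordered pair $(h'(p^{-}),h'(p^{+}))$ is \emph{not} preserved; for instance the pair $(-1,-2^{n})$ at $0$ becomes $(-1/4,-2^{n+2})$. So $\Sigma(h)$ as you defined it is not a $T$-conjugation invariant, and the last step of your argument does not go through as written.

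The repair is immediate: the same computation shows that the \emph{product} $h'(p^{-})\,h'(p^{+})$ at each fixed point---equivalently the two-sided derivative of $h^{2}$ at $p$, which exists---is a genuine invariant of the $T$-orbit, and for your examples these products are $2^{n}$ at $0$ and $2^{-n}$ at $1/2$, so the multisets $\{2^{n},2^{-n}\}$ still separate the $h_{n}$. With that correction your proof is complete. For comparison, the paper avoids local derivative data entirely: Lemma \ref{finiteorder} shows that for $x,y\in\fix(\rho)$, $x\sim_{\rho}y$ forces $x^{2}$ and $y^{2}$ to be conjugate in $T$, and one then exhibits $\rho$-symmetric elements $h_{k}=f_{k}\cdot\rho(f_{k})$ whose squares have $2k$ components of support, an invariant insensitive to breakpoints. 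Your local-germ invariant, once corrected to the product form, is an equally valid and somewhat more quantitative alternative.
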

We shall describe the groups $F$ and $T$ more fully in \S2, leaving out $V$.  We prove the above theorem in 
\S3.   
 The groups $F$ and $T$ have been 
 generalized by K. S. Brown \cite{BrownFinite} to obtain families of finitely presented groups $F_{n,\infty}, F_n, T_{n,r}, n\ge 2, r\ge 1$ where $F=F_{2}=F_{2,\infty}$ and $T=T_{2,1}$.     Denoting 
any one of them by $F_n$ the group $F_{n,\infty}$ is 
isomorphic to a certain subgroup of $F_n$ of index $n-1$. 
Furthermore $F_{n,r}$ is a subgroup of $T_{n,r}$.   

The 
group $V$ has been generalized by G. Higman 
\cite{HigmanFPSG} to obtain an infinite family of finitely presented groups. But 
we shall not consider them in this paper.

The $R_\infty$-property for $F$ was shown by 
Bleak, Fel'shtyn and Gon\c{c}alves \cite{BlFeGon}.
It has been established by Gon\c{c}alves and Kochloukova \cite[Corollary 4.2]{gk} that the groups $F_{n,\infty}$ have the $R_\infty$-property.  In the last section we make a few comments about the $R_\infty$-property for the groups $F_n$ and $T_{n,r}$.

After this paper was submitted, Collin Bleak had brought our attention to the paper \cite{bmv} of Burillo, Matucci and Ventura where it is shown, among other things, that $T$ has the $R_\infty$-property.  (They also obtain a new proof of the $R_\infty$-property for the group $F$.)  Their proof and the proof given here are based on the same idea of constructing elements with specified number of components of fixed point sets.   We hope  
that Lemmas \ref{finiteorder} and \ref{torsion} which were used in our proof may be useful in other contexts as well.  
 
\noindent
{\bf Acknowledgments:}   We thank J. Burillo for pointing out a misquote in an earlier version of this paper in the statement of Theorem \ref{brin}(i),  based on which we had erroneously claimed that our proof of Theorem \ref{main} also establishes the $R_\infty$-property for $F$.  
We thank Collin Bleak for bringing to our notice the paper \cite{bmv}.    
The first author is indebted to Bleak, A. Fel'shtyn, and J. Taback for fruitful discussions about the Thompson groups.   The first  author has been partially supported by Fapesp project  Tem\'atico Topologia Alg\'ebrica, Geometrica e Diferencial no 2012/24454-8. 
This project was initiated during the visit of the second author to the University of S\~ao Paulo in August 2012. He thanks the first author for the invitation and the warm hospitality.  He is also thankful to the 
organizers of the XVIII Brazilian Topology Meet (EBT-2012) for the invitation and financial support, making 
the visit to Brazil possible.

\section{Richard Thompson's groups $F$ and $T$}

In this section we give a description of Thompson 
groups $F$ and $T$.   The group $F$ consists 
of all piecewise linear (PL) homeomorphisms of $\mathbb[0,1]$ 
with at most a finite set of break points (i.e., points of 
non-differentiability) which are contained in the dyadic rationals $\mathbb{Z}[1/2]$ and having slopes (at points of differentiability) 
in the multiplicative group $\langle 2\rangle=\{2^n\mid n\in \mathbb{Z}\}$.   Note that elements of $F$ are orientation 
preserving.  It is known that the group $F$ is generated by two elements 
$A$ and $B$ defined as follows: 
\[
A(x)=\left\{ \begin{array}{lr}
        x/2, & 0\le  x\le 1/2,\\
        x-1/4, & 1/2\le x\le 3/4,\\
        2x-1, & 3/4\le x\le 1.\\
        \end{array}\right. 
\]
and 
\[B(x)=\left\{ \begin{array}{lr}
x, & 0\le x\le 1/2,\\
x/2+1/4, & 1/2\le x\le 3/4,\\
x-1/8, & 3/4\le x\le 7/8,\\
2x-1, & 7/8\le x\le 1.\\ 
\end{array}\right.
\]
Indeed one has a presentation 
\[F=\langle A, B\mid [AB^{-1}, A^{-1}BA], [AB^{-1}, A^{-2}BA^{-2} ]\rangle. \]

The group $T$ consists of all PL-homeomorphisms of 
the circles $\mathbb{S}^1=I/\{0,1\}$ 
which have at most a finite set of break-points contained in $\mathbb{Z}[1/2]$ and having slopes contained in $\langle 2\rangle$.  Again the elements of $T$ preserve the orientation. 

Any homeomorphism of $[0,1]$ induces a homeomorphism of $\mathbb{S}^1$ and this allows us 
to view $F$ as a subgroup of $T$.  
One has an element $C$ in $T$ which is defined as 
\[ C(x)=\left\{\begin{array}{lr}
x/2+3/4, & 0\le x\le 1/2,\\
2x-1, & 1/2\le x\le 3/4,\\
x-1/4, &3/4\le x\le 1.\\
\end{array}
\right.
\]
It is understood that in the above definition $x$ is read modulo $1$.  

It is known that $T$ is generated by the elements $A, B, C$ with six relations.  Although we will have no 
need for it here, we list below the relations in the said 
presentation for the sake of completeness:  (Note that the first two are the same as 
the defining relations of $F$.)
See \cite{CFP} for details.\\
(1)  $[AB^{-1},A^{-1}BA]=1$,\\
(2)  $[AB^{-1}, A^{-2}BA^2]=1$,\\
(3)  $C=BA^{-1}CB$,\\
(4) $A^{-1}CB.A^{-1}BA=B.A^{-2}CB^2$\\
(5) $CA= (A^{-1} CB)^2$\\
(6) $C^3=1$.

Our proof of Theorem \ref{main} will crucially make use of the following 
result of Brin \cite{BrinCh}.   It is easily seen that 
the reflection map $r$ defined as $r(x)=1-x, x\in [0,1]$, 
induces an automorphism $\rho: T\to T$ defined as $\rho(f)=r\circ f\circ r^{-1}=r\circ f\circ r$. 
We denote by the same symbol $\rho$ the restriction $\rho|_F\in \aut(F)$. 

\begin{theorem} (\cite{BrinCh}) \label{brin}
(i) The group $\out(F)$ of outer automorphisms of $F$ contains an index two subgroup $\out^+(F)$  
isomorphic to $T\times T$. The non-trivial element in the quotient group $\out(F)/\out^+(F)$ is represented 
$\rho\in \aut(F).$\\
(ii) The group of inner automorphisms of $T$ is of index two in $\aut(T)$ and the 
quotient group $\out(T)$ is generated by 
$\rho$. 
\end{theorem}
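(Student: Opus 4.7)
The plan is to realize each element of $\aut(F)$ (respectively $\aut(T)$) geometrically as conjugation by a PL homeomorphism of $[0,1]$ (respectively $\mathbb{S}^1$) of a suitable type, and then to identify the quotient modulo the inner automorphisms.

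For part (i), I would first use that the commutator subgroup $F'$, being characteristic, is preserved by every $\phi\in\aut(F)$; a classical result of Thompson identifies $F'$ with the subgroup of elements supported in a compact subset of $(0,1)$. The principal technical step is then to extract from the algebraic structure of $F$ enough topological information to reconstruct a homeomorphism of $(0,1)$. One detects the local behaviour of $F$ near a dyadic rational $q\in(0,1)$ using centralizers and commuting subgroups supported on arbitrarily small dyadic neighborhoods of $q$, and shows that $\phi$ induces a well-defined order-preserving or order-reversing bijection on the dyadic rationals in $(0,1)$. This bijection extends by density to a homeomorphism $h_\phi$ of $(0,1)$ satisfying $\phi(g)=h_\phi\circ g\circ h_\phi^{-1}$ for all $g\in F$, and one then checks that $h_\phi$ is forced to be piecewise in $F$, with dyadic breakpoints and slopes in $\langle 2\rangle$, although globally $h_\phi$ need not belong to $F$.

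The next step is to analyze the group of such $h_\phi$ modulo $F$. Restricting first to orientation-preserving $h_\phi$ and factoring out conjugation by elements of $F$ leaves only the germs of $h_\phi$ at the two endpoints $0$ and $1$. Using the standard doubling identification of consecutive intervals $[1/2^{n+1},1/2^n]$ accumulating at $0$ (and similarly at $1$) with arcs of $\mathbb{S}^1$, such a germ is encoded by an element of $T$, yielding a homomorphism $\Psi\colon\out^+(F)\to T\times T$. Surjectivity follows from an explicit construction realising any prescribed pair of germs, while injectivity follows because an $h_\phi$ with trivial germs at both endpoints is compactly supported, hence lies in $F'\subseteq F$ and gives an inner automorphism. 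Since the reflection $\rho$ is orientation-reversing it is not inner, and any orientation-reversing $h_\phi$ differs from $\rho$ by an orientation-preserving homeomorphism; this yields the index-two statement for $\out^+(F)\subseteq\out(F)$ and identifies the non-trivial coset with $\rho$.

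For part (ii) the strategy is parallel but simpler. Every $\phi\in\aut(T)$ is realized by conjugation by a homeomorphism $h_\phi$ of $\mathbb{S}^1$ that is piecewise in $T$, with dyadic breakpoints and slopes in $\langle 2\rangle$; since $\mathbb{S}^1$ has no distinguished basepoint there is no endpoint germ data left to record, and $T$ itself already accounts for every such orientation-preserving $h_\phi$. After composing with an inner automorphism one may thus assume $h_\phi$ is either trivial or equals the reflection $r$, giving $\out(T)=\langle\rho\rangle$ of order two. The chief obstacle in both parts is the initial geometric realization step: extracting from purely group-theoretic data a continuous (in fact PL) homeomorphism implementing a given automorphism. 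This rigidity statement is the technical heart of Brin's argument and rests on the abundance of commuting subgroups of $F$ (and of $T$) with arbitrarily small, specified dyadic supports, which permit the algebraic recovery of points of $(0,1)$ (and of $\mathbb{S}^1$).
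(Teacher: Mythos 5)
The paper does not prove this statement at all: Theorem \ref{brin} is imported verbatim from Brin's paper \cite{BrinCh} and used as a black box, so there is no in-paper argument to compare yours against. Judged on its own terms, your outline is a faithful high-level reconstruction of Brin's actual strategy: realize each automorphism as conjugation by a homeomorphism via an algebraic-reconstruction (McCleary--Rubin/Rubin-type) rigidity argument, show the conjugator is locally PL of the appropriate dyadic type, and read off $\out^+(F)\cong T\times T$ from the germs at the two ends of the interval (equivalently, at $\pm\infty$ in the line model, where the germ commutes with doubling and hence descends to an element of $T$ on the quotient circle). The coset structure with $\rho$ and the statement for $T$ also match Brin.

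That said, as a proof your text is a program rather than an argument: the two pillars --- (a) that every $\phi\in\aut(F)$ (resp.\ $\aut(T)$) is induced by a homeomorphism of the underlying space, and (b) that this homeomorphism is forced to be locally PL with dyadic breakpoints and slopes in $\langle 2\rangle$ --- are exactly the technical content of \cite{BrinCh} and are only asserted, not established. Point (b) in particular is delicate: for the generalized groups $F_n$, $n>2$, the analogous conjugators need \emph{not} be PL (as the present paper itself remarks in \S 4), so the PL conclusion for $F$ and $T$ is a genuine theorem, not a routine check. One small inaccuracy in your injectivity step: ``trivial germ at an endpoint'' means trivial as an element of $T$, i.e.\ the germ is that of some element of $F$ (multiplication by a power of $2$ in suitable coordinates), not the identity germ; so $h_\phi$ need not be compactly supported until you first compose with a suitable element of $F$. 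This is easily repaired but should be said.
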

\section{Proof of Theorem \ref{main}}
 Let $\Gamma$ be a group and let $\phi\in \aut(\Gamma)$. For $\gamma\in \Gamma$, denote by $\iota_\gamma$ the inner automorphism $x\mapsto \gamma x\gamma^{-1}, ~x\in \Gamma$.  We first observe that $R(\phi)=R(\phi\circ \iota_\gamma)$. In fact $y=zx\phi(\iota_\gamma(z^{-1}))$ if and only if $y.\phi(\gamma)=z(x\phi(\gamma))\phi(z^{-1})$. Thus elements of $\mathcal{R}(\phi\circ\iota_\gamma)$ are translation on the left by $\phi(\gamma)$ 
 of  the elements of $\mathcal{R}(\phi)$.   In view of this, 
 to show that $\Gamma$ is an $R_\infty$-group, it suffices to show that $R(\phi)=\infty$ for a set of 
 coset representatives of $\out(\Gamma)$.  In the case 
$\Gamma=T$, in view of  Theorem \ref{brin}  due to Brin,  we need only show that $R(\rho)=\infty$ and $R(id)=\infty$.    The latter equality is established in 
Proposition \ref{conjugacy} as an easy consequence of Lemma \ref{support} below. 

\begin{definition}  {\it Let $X$ be a Hausdorff topological space.\\
(i) The {\em support of}  $f\in \homeo(X)$ is the open set $\supp(f):=\{x\in X\mid f(x)\ne x\}$. \\
(ii) Let $\sigma:\homeo(X) \to \mathbb{N}\cup \{\infty\}$ be  
defined as follows: $\sigma(id)=0$, if $f\ne id$, $\sigma(f)$ is the number of connected components of $\supp(f)$ if it is finite, otherwise $\sigma(f)=\infty$.} 
\end{definition}

\begin{lemma} \label{support} 
Let $\Gamma\subset \homeo(X)$ and let 
$\sigma$ be as defined above.  Suppose that $\theta\in \homeo(X)$ normalizes $\Gamma$.  Then 
$\sigma(f)=\sigma (\theta f \theta^{-1})$.
\end{lemma}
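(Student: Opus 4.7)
The plan is to observe that conjugation by the homeomorphism $\theta$ sends the support of $f$ to the support of $\theta f\theta^{-1}$, and since a homeomorphism preserves the number of connected components, we are done. The hypothesis that $\theta$ normalizes $\Gamma$ only serves to guarantee that $\theta f \theta^{-1}$ lies in $\Gamma$ (and hence it is meaningful to compare values of $\sigma$ on both elements simultaneously within $\Gamma$); the computation itself uses only that $\theta \in \homeo(X)$.

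First I would verify the set equality $\supp(\theta f \theta^{-1}) = \theta(\supp(f))$. This follows from the chain of equivalences: $x \in \supp(\theta f \theta^{-1})$ iff $\theta f \theta^{-1}(x) \ne x$ iff $f(\theta^{-1}(x)) \ne \theta^{-1}(x)$ iff $\theta^{-1}(x) \in \supp(f)$ iff $x \in \theta(\supp(f))$. If $f = \mathrm{id}$ both sides are empty and $\sigma(f) = 0 = \sigma(\theta f\theta^{-1})$, so we may assume $f \ne \mathrm{id}$ (equivalently $\theta f \theta^{-1} \ne \mathrm{id}$).

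Second, since $\theta \colon X \to X$ is a homeomorphism, its restriction to the open set $\supp(f)$ is a homeomorphism onto $\theta(\supp(f)) = \supp(\theta f \theta^{-1})$. Homeomorphisms induce bijections between the sets of connected components, so $\supp(f)$ and $\supp(\theta f\theta^{-1})$ have the same number of connected components (finite or infinite). By the definition of $\sigma$ this gives $\sigma(f) = \sigma(\theta f\theta^{-1})$, completing the proof.

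There is no real obstacle here: the lemma is essentially the statement that $\sigma$ is a conjugation invariant on $\homeo(X)$, and the entire argument rests on the tautological behaviour of support under conjugation together with the topological invariance of the number of connected components under homeomorphisms.
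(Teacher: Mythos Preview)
Your proof is correct and follows essentially the same approach as the paper: both arguments rest on the identity $\supp(\theta f\theta^{-1}) = \theta(\supp(f))$ together with the fact that a homeomorphism preserves the number of connected components of an open set. You simply supply a bit more detail in verifying the support identity and in handling the trivial case $f=\mathrm{id}$.
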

\begin{proof} It is clear that the number of 
connected components of an open set $U\subset X$ remains unchanged under a homeomorphism of $X$.
The lemma follows immediately from the observation 
that $\supp(\theta f \theta^{-1})=\theta(\supp(f))$.  
\end{proof}

The following proposition is well-known.  It follows, for example, from \cite[Remark 3.1]{BlFeGon} for the group $F$ and from \cite[\S5]{CFP} for the group $T$, where certain elements $C_n\in T$ of order $n+2$ for every $n\ge 1$ are explicitly given.   However, we give an elementary unified proof for the sake of completeness.
\begin{proposition} \label{conjugacy}{\em 
The groups $F$ and $T$ have infinitely many 
conjugacy classes.}
\end{proposition}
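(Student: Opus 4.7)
The plan is to use the invariant $\sigma$ from the definition preceding Lemma~\ref{support} to separate infinitely many conjugacy classes simultaneously in $F$ and $T$. Because any subgroup of $\homeo(X)$ normalizes itself, Lemma~\ref{support} at once yields that $\sigma$ is constant on conjugacy classes of $F$ (taking $X=[0,1]$) and of $T$ (taking $X=\mathbb{S}^1$). It therefore suffices to exhibit, for each integer $n\ge 1$, an element $f_n\in F\subseteq T$ satisfying $\sigma(f_n)=n$; the family $\{f_n\}_{n\ge 1}$ will then give pairwise distinct conjugacy classes in both groups.

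To build $f_n$, I would first choose an integer $m$ with $2^m\ge n$ and pick $n$ pairwise disjoint closed dyadic intervals $I_1,\dots,I_n$ of length $2^{-m}$ among the intervals $[k/2^m,(k+1)/2^m]$, $0\le k<2^m$. For each $j$, let $\varphi_j\colon[0,1]\to I_j$ be the orientation-preserving affine bijection; it has slope $2^{-m}\in\langle 2\rangle$ and sends dyadic rationals to dyadic rationals. Define
\[
g_j(x)=\begin{cases}(\varphi_j\circ A\circ\varphi_j^{-1})(x), & x\in I_j,\\ x, & x\in[0,1]\setminus I_j,\end{cases}
\]
where $A$ is the generator of $F$ from \S2, for which one checks directly that $\supp(A)=(0,1)$. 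Conjugation by $\varphi_j$ preserves the dyadic nature of break points and the requirement that slopes lie in $\langle 2\rangle$, so $g_j\in F$ and its support is the interior of $I_j$. The $g_j$'s commute pairwise because their supports are disjoint; set $f_n:=g_1g_2\cdots g_n\in F$. Its support is the disjoint union of the interiors of the $I_j$, a union of exactly $n$ open intervals, so $\sigma(f_n)=n$.

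The final point is to transport this to $T$: since $f_n$ fixes both $0$ and $1$, it descends to a homeomorphism of $\mathbb{S}^1=[0,1]/(0\sim 1)$ fixing the basepoint, and its support in $\mathbb{S}^1$ is the same open subset as in $[0,1]$, still with $n$ connected components. Lemma~\ref{support} then separates the $f_n$'s into distinct conjugacy classes in both $F$ and $T$, proving the proposition. The closest thing to a genuine obstacle is ensuring that the rescaled copies $g_j$ really lie in $F$, which is why I would insist on choosing the $I_j$ of dyadic length $2^{-m}$ so that $\varphi_j$ has slope in $\langle 2\rangle$; with this choice the verification is completely routine, and the whole argument is unified for both groups.
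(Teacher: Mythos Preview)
Your argument is correct and follows exactly the same route as the paper's proof: both apply Lemma~\ref{support} to conclude that $\sigma$ is a conjugacy invariant, then observe that $F\subset T$ contains elements realizing every value of $\sigma$. The paper simply asserts that such elements exist, whereas you supply an explicit construction via rescaled copies of $A$ on disjoint dyadic subintervals; this is precisely the kind of routine verification the paper leaves implicit.
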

\begin{proof}
This follows from Lemma \ref{support} on observing that 
$F$ has elements $f$ with $\sigma(f)$ 
any arbitrary prescribed positive integer.  
Since $F\subset T$, the same is true of $T$ as well.
\end{proof}

We need the following lemma.  If $\theta$ is an endomorphism 
of a group $\Gamma$ we denote by $\fix(\theta)$ the fixed subgroup $\{x\in \Gamma\mid 
\theta(x)=x\}$ of $\Gamma$.

\begin{lemma} \label{finiteorder}
Let $\Gamma$ be a group and 
let $\theta\in \aut(\Gamma)$.  Suppose that $\theta^n=\iota_\gamma$.   
Suppose that $\{x^n\gamma\mid x\in Fix(\theta)\}$ is not contained 
in the union of finitely many conjugacy classes of $\Gamma$.  Then $R(\theta)=\infty$.
\end{lemma}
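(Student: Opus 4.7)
The plan is to construct a well-defined map from the set of $\theta$-twisted conjugacy classes of $\Gamma$ that meet $\fix(\theta)$ to the set of ordinary conjugacy classes of $\Gamma$, sending the twisted class of $x\in\fix(\theta)$ to the conjugacy class of $x^n\gamma$. Once this map is well-defined, the hypothesis that $\{x^n\gamma\mid x\in\fix(\theta)\}$ meets infinitely many conjugacy classes of $\Gamma$ shows its image is infinite, hence so is its domain, and therefore $R(\theta)=\infty$.

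To verify well-definedness, I would take $x_1,x_2\in\fix(\theta)$ with $x_2=zx_1\theta(z^{-1})$ for some $z\in\Gamma$, and show that $x_1^n\gamma$ and $x_2^n\gamma$ are conjugate in $\Gamma$. The cleanest bookkeeping is via the semidirect product $\widetilde\Gamma=\Gamma\rtimes_\theta\mathbb{Z}$, presented as $\langle\Gamma,t\mid txt^{-1}=\theta(x)\rangle$. In $\widetilde\Gamma$ one has the tautological identity $z(xt)z^{-1}=(zx\theta(z^{-1}))t$, so the twisted-conjugacy assumption gives $x_2t=z(x_1t)z^{-1}$. Raising to the $n$th power and using $(x_it)^n=x_i^nt^n$, which is valid because $x_i\in\fix(\theta)$, yields $x_2^nt^n=zx_1^nt^nz^{-1}$. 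Pushing $z^{-1}$ past $t^n$ via $t^nz^{-1}=\theta^n(z^{-1})t^n=\gamma z^{-1}\gamma^{-1}t^n$ (this is the only place $\theta^n=\iota_\gamma$ is used) and cancelling $t^n$ on the right gives the desired identity $x_2^n\gamma=z(x_1^n\gamma)z^{-1}$.

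The main obstacle is simply keeping the bookkeeping straight when moving $\Gamma$-elements past $t^n$; once the semidirect-product framework is adopted, the verification is essentially a single calculation, and the conclusion $R(\theta)=\infty$ follows from the hypothesis with no further effort. If one prefers to avoid $\widetilde\Gamma$ entirely, the same identity can be obtained by a direct telescoping inside $\Gamma$: since $x_2\in\fix(\theta)$, we have $x_2^n=\prod_{k=0}^{n-1}\theta^k(x_2)$, and substituting $\theta^k(x_2)=\theta^k(z)\,x_1\,\theta^{k+1}(z^{-1})$, which uses $x_1\in\fix(\theta)$, makes adjacent factors $\theta^{k+1}(z^{-1})\theta^{k+1}(z)$ cancel, collapsing the product to $zx_1^n\theta^n(z^{-1})=zx_1^n\gamma z^{-1}\gamma^{-1}$, again giving $x_2^n\gamma=z(x_1^n\gamma)z^{-1}$.
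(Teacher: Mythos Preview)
Your proof is correct and is essentially the same as the paper's: the paper performs exactly the direct telescoping computation you give at the end (applying $\theta^i$ to the twisted-conjugacy relation and multiplying for $0\le i<n$) to conclude that $x_2^n\gamma$ and $x_1^n\gamma$ are conjugate whenever $x_1,x_2\in\fix(\theta)$ lie in the same $\theta$-twisted class. Your semidirect-product argument is just a convenient repackaging of this same calculation, and the deduction of $R(\theta)=\infty$ from the conjugacy-class hypothesis is identical.
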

\begin{proof}
Let $x\sim_\theta y$ in $\Gamma$ where $x,y\in \fix(\theta)$. Thus there exists an $z\in \Gamma$ such that 
$y=z^{-1}x\theta(z)$.  Applying $\theta^i$ both sides,  
we obtain $y=\theta^i(z^{-1})x\theta^{i+1}(z)$ as $x,y\in \fix(\theta)$.  
Multiplying these equations successively for $0\le i<n$ and using $\theta^n=\iota_\gamma$, we obtain  
\[ y^n=\prod_{0\le i<n} \theta^i(z^{-1})x\theta^{i+1}(z)
=z^{-1}x^n\theta^n(z)=z^{-1}x^n\gamma z \gamma^{-1}.\]
That is, $y^n\sim_{\iota_\gamma} x^n$.  Equivalently $y^n\gamma$ and $x^n\gamma$ are in 
the same conjugacy classes of $\Gamma$. 
Our hypothesis says that there are infinitely many elements 
$x_k\in \fix(\theta), k\ge 1$, such that the $x_k^n\gamma$  are in 
pairwise distinct $\iota_\gamma$-conjugacy classes of $\Gamma$.   Hence 
we conclude that $R(\theta)=\infty$.  
\end{proof}

We remark that when $\theta\in Aut(\Gamma)$ of order $n$,  we may take $\gamma$ to be the identity.  Therefore
$R(\theta)=\infty$ when $\{x^n \in \Gamma\mid \theta(x)=x\}\subset \Gamma$ is not contained in the union of 
finitely many conjugacy classes of $\Gamma$.  This 
observation leads to the following.

\begin{lemma}\label{torsion}
Let $\theta\in Aut(\Gamma)$ be of order $n<\infty$.  Suppose 
that the set $\mathcal{T}\subset \mathbb{N}$ of orders of torsion elements of $\fix(\theta)$ is unbounded. 
Then $R(\theta)=\infty$.  
\end{lemma}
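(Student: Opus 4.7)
The plan is to apply the remark immediately preceding the lemma, which says that when $\theta \in \aut(\Gamma)$ has finite order $n$, one may take $\gamma = e$ in Lemma \ref{finiteorder}, so that $R(\theta)=\infty$ whenever the set $\{x^n \mid x \in \fix(\theta)\}$ is not contained in finitely many conjugacy classes of $\Gamma$. My task therefore reduces to producing, under the torsion hypothesis, infinitely many elements of the form $x^n$ (with $x \in \fix(\theta)$) lying in pairwise distinct conjugacy classes of $\Gamma$.

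The strategy is to use the order of a group element as a conjugacy invariant. Since $\mathcal{T}$ is unbounded, I can choose a sequence of torsion elements $x_k \in \fix(\theta)$ whose orders $m_k$ tend to infinity. For each such $x_k$, the order of $x_k^n$ equals $m_k / \gcd(m_k, n)$. Because $\gcd(m_k, n) \le n$ and $n$ is fixed while $m_k \to \infty$, the order of $x_k^n$ satisfies
\[
\operatorname{ord}(x_k^n) \;\ge\; \frac{m_k}{n} \;\longrightarrow\; \infty.
\]
In particular, after passing to a subsequence, the elements $x_k^n$ have pairwise distinct (finite) orders, hence lie in pairwise distinct conjugacy classes of $\Gamma$.

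Combining this with the remark preceding the lemma finishes the argument: the set $\{x^n \mid x \in \fix(\theta)\}$ meets infinitely many conjugacy classes of $\Gamma$, so Lemma \ref{finiteorder} (specialized to $\gamma = e$) yields $R(\theta) = \infty$.

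The only mildly delicate point is the arithmetic observation $\operatorname{ord}(x_k^n) = m_k/\gcd(m_k,n) \ge m_k/n$, which prevents the passage $x_k \mapsto x_k^n$ from collapsing the unbounded family to torsion of bounded order; everything else is a direct invocation of the preceding results, so I do not anticipate any real obstacle.
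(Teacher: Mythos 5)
Your proposal is correct and follows essentially the same route as the paper: both reduce to the remark that $\gamma=e$ may be taken in Lemma \ref{finiteorder} when $\theta$ has finite order, and both deduce that $\{o(x^n)\mid x\in\fix(\theta)\}$ is unbounded so that the powers $x^n$ meet infinitely many conjugacy classes. Your explicit verification that $\operatorname{ord}(x^n)=m/\gcd(m,n)\ge m/n$ simply spells out a step the paper leaves implicit.
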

\begin{proof}  Our hypothesis on $\mathcal{T}$ implies that the 
$\{o(x^n)\mid x\in \fix(\theta)\}\subset \mathbb{N}$ is 
unbounded.  Therefore elements of $\{x^n\mid x\in \fix(\theta)\}$ represent infinitely many distinct 
conjugacy classes of $\Gamma$.  By Lemma \ref{finiteorder}
we conclude that $R(\theta)=\infty$.
\end{proof}

\begin{lemma} \label{iterates}
Suppose that $h:\mathbb{R}\to \mathbb{R}$ is an 
orientation preserving homeomorphism.   Then $\supp(h)=\supp(h^k)$ for any non-zero integer $k$. 
\end{lemma}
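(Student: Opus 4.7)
The containment $\supp(h^k)\subseteq \supp(h)$ is immediate: if $h(x)=x$ then $h^k(x)=x$ for every integer $k$, so any fixed point of $h$ is a fixed point of $h^k$.

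For the reverse inclusion, the plan is to use the fact that an orientation preserving homeomorphism of $\mathbb{R}$ is strictly monotonically increasing, and this property is preserved under taking inverses and compositions. Suppose $x\in \supp(h)$, so $h(x)\ne x$. Then either $h(x)>x$ or $h(x)<x$. Assume first that $h(x)>x$. Applying the monotonically increasing map $h$ to both sides, we get $h^2(x)>h(x)>x$, and by induction $h^{k}(x)>h^{k-1}(x)>\cdots>h(x)>x$ for all $k\ge 1$. In particular $h^k(x)\ne x$ for $k\ge 1$. For $k\ge 1$ negative iterates are handled by noting that $h^{-1}$ is also orientation preserving, hence monotonically increasing; applying $h^{-1}$ to $h(x)>x$ gives $x>h^{-1}(x)$, and the same inductive argument yields $h^{-k}(x)<x$ for all $k\ge 1$, so again $h^{-k}(x)\ne x$. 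The symmetric argument handles the case $h(x)<x$.

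Combining both directions, $x\in \supp(h)$ iff $x\in \supp(h^k)$ for every $k\ne 0$, proving the lemma. The only place where any care is required is the negative-$k$ case, which is dispatched by the observation that the inverse of an orientation preserving homeomorphism of $\mathbb{R}$ is again orientation preserving. No calculation beyond iterated monotonicity is needed, so I expect no significant obstacle.
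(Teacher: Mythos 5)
Your proof is correct and follows essentially the same route as the paper: both establish $\supp(h^k)\subseteq\supp(h)$ by noting fixed points of $h$ are fixed by $h^k$, and the reverse inclusion by iterating the order-preserving property to get $x<h(x)<\cdots<h^k(x)$. The only cosmetic difference is that you treat negative $k$ directly via the monotonicity of $h^{-1}$, while the paper reduces to $k>0$ using $\supp(g)=\supp(g^{-1})$; both are fine.
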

\begin{proof} Since $\supp(h)=\supp(h^{-1})$ we may 
assume that $k>0$.
Since $h$ is orientation preserving, it is order preserving. 
Suppose that $x\in \supp(h)$ so that $h(x)\ne x$. Say,  $x<h(x)$.  Then applying $h$ to the inequality we obtain $h(x)<h^2(x)$ so that $x<h(x)<h^2(x)$. Repeating this 
argument yields $x<h(x)<\cdots <h^k(x)$ and so $x\in \supp(h^k)$.   The case when $x>h(x)$ is analogous. Thus $\supp(h)\subset \supp(h^k)$.  On the other hand, if $x\notin \supp(h)$, then $h(x)=x$ and so $h^k(x)=x$ for all $k$.  Therefore equality should hold, completing the proof. \end{proof}

We are now ready to prove our main theorem. 

\noindent
{\it Proof of Theorem \ref{main}:}  
By Theorem \ref{brin}(ii), $\out(T)\cong \mathbb{Z}/2\mathbb{Z}$ generated by $\rho$.  By Proposition 
\ref{conjugacy}, $R(id)=\infty$.   
It only remains to verify that $R(\rho)=\infty$.  We apply 
Lemma \ref{finiteorder} with $\theta=\rho, n=2, \gamma=1$.  
It remains to show that $\fix(\rho)$ has infinitely many elements $h$ such 
that $h^2$ are pairwise non-conjugate.  

Let $k\ge 1$.
Let $f_k\in F\subset T$ be such that $\supp(f_k)\subset (0,1/2)$ and has exactly $k$ components. Thus, $\sigma(f_k)=k$. (It is easy to construct such an 
element.) 
Then $\supp(\rho(f_k))
=\supp(rf_kr^{-1})=r(\supp(f_k))\subset (1/2,1)$ is disjoint from $\supp(f_k)\subset (0,1/2)$.  In particular $f_k.\rho(f_k)=\rho(f_k).f_k=:h_k$, $\supp(h_k)=\supp(f_k)\cup r(\supp(f_k))$ and so $\sigma(h_k)=2k$. 
Moreover, since $\rho^2=1$, we see that $h_k\in Fix(\rho)$.
By 
Lemma \ref{iterates}, we have $\sigma(h_k^2)=\sigma(h_k)=2k$.
It follows that $h_k^2$ are pairwise non-conjugate in $T$, completing the proof. \hfill $\Box$.
\section{Generalized Thompson groups}

 The group $F$ has been 
generalized to yield two families of groups $F_{n,\infty}, F_n$, $n\ge 2$, and the group $T$ likewise has been generalized to a family of groups $T_{n, r},n\ge 2, r\ge 1,$ 
where $F\cong F_2=F_{2,\infty}$ and $T=T_{2,1}$. 
One has inclusions $F_{n,\infty}\subset F_n\subset T_{n,r}$ for all $n\ge 2, r\ge 1$.   Let $\Gamma$ be any one 
of the groups $F_{n,\infty}, F_n, T_{n,r}$.    Then $\Gamma$ is realized as a group of homeomorphisms of $\mathbb{R}$ or $\mathbb{S}^1=\mathbb{R}/r\mathbb{Z}$ according as $\Gamma=F_{n,\infty}, F_n$ or $\Gamma=T_{n,r}$ respectively.   More precisely, one has the following description given in 
\cite[Proposition 2.2.6]{BrinGuzman}.  The group $F_n$ 
is the group of all orientation preserving PL-homeomorphisms of $\mathbb{R}$ having only finitely many break points (that is, points of non-differentiability) such that (i) the break-points are all in $\mathbb{Z}[1/n],$ (ii) the slopes at smooth points are all in the set $\{n^k | k \in \mathbb{Z}\}=:\langle n\rangle$,  (iii) they map the set $\mathbb{Z}[1/n]$ into itself, and, (iv) they are translations by integers near $-\infty$ and $\infty$.   
(For the last condition, see Definition \ref{slopetransl} below.)
The group $F_{n,\infty}$ is the subgroup of $F_n$ which consists 
of those homeomorphisms $f\in F_n$ which maps $\Delta_n$ into itself where $\Delta_n$ is the 
kernel of the unique surjective ring homomorphism $\mathbb{Z}[1/n]\to \mathbb{Z}/(n-1)\mathbb{Z}$.   The group $T_{n,r}\subset \homeo(\mathbb{R}/r\mathbb{Z})$ consists of those elements which are orientation 
preserving and lift to PL-homeomorphisms of $\mathbb{R}$ satisfying conditions (i) to (iii) above.

It turns out that $F\cong F_2=F_{2,\infty}$ and $T\cong T_{2,1}$.   The groups $F_{n,\infty}, F_n$ and $T_{n,r}$ are referred to as the 
generalized Thompson groups.  See \cite{BrinGuzman} 
for a detailed study of these groups and their automorphism groups.  

Brin and Guzm\'an also introduced 
a family of groups $F_{n,j},n\ge 2, j\in \mathbb{Z}$ each of which is isomorphic to $F_{n,\infty}$.  See 
\cite[Lemma 2.1.6 and Corollary 2.3.1.1]{BrinGuzman}.  
Gon\c{c}alves and Kochloukova \cite{gk} have shown,  
using the theory of $\Sigma$-invariants in homological algebra, that the groups $F_{n,0},~n \ge 2,$ 
and hence $F_{n,\infty}$, have the $R_\infty$-property.  In this section 
we show that if $\theta\in \aut(F_n)$ represents a torsion element in the outer 
automorphism group, then $R(\theta)=\infty$.

It was observed by Brin and Guzm\'{a}n, using a deep result of McCleary and Rubin \cite{MR}, that every 
automorphism of  a generalized Thompson group 
is given by conjugation by a homeomorphism of $\mathbb{R}$ or the circle $\mathbb{S}^1$. 
(Such a homeomorphism is not, in general, a PL-homeomorphism!) Invoking this result, 
our proof of Proposition \ref{conjugacy} applies  
equally well to the generalized Thompson groups showing 
that they have infinitely many conjugacy classes.  

\begin{definition} \label{slopetransl}
{\em 
Let $\gamma$ be a PL-homeomorphism of $\mathbb{R}$ with finitely many break points. 
Suppose that $\gamma(t)=at+b$ for $t>0$ large.  We call $a\in \mathbb{R}$ the {\em slope at} $\infty$ and 
$b\in \mathbb{R}$ 
{\em the translation at} $\infty$  and denote them by $\lambda(\gamma)$ and $\tau(\gamma)$ respectively. } 
\end{definition}

We note that $\lambda$ is constant on conjugacy classes of the group of all PL-homeomorphisms of $\mathbb{R}$ with 
finitely many break points.  If $z, h,h'$ are such homeomorphisms and if $\lambda(h)=1=\lambda(h'),$ then $\tau(hh')=\tau(h)+\tau(h')$ and $\tau(zhz^{-1})=\lambda(z).\tau(h)$ as may be verified easily. 

By the description of $F_n$ given above,  $\tau(\gamma)\in \mathbb{Z}$ and $\lambda(\gamma)=1 $ if $\gamma\in F_n$.  Moreover, $\tau(\gamma)\in (n-1)\mathbb{Z}$ if $\gamma\in F_{n,\infty}$.

\begin{proposition} {\em
Let $\theta\in \aut(\Gamma)$ represent an outer automorphism $[\theta]$ of $\Gamma$ of finite order where $\Gamma$ is one of the groups $F_{n,\infty}, F_n, n\ge 2$.  Then $R(\theta)=\infty$.}
\end{proposition}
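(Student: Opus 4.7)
The plan is to apply Lemma~\ref{finiteorder}. Let $k$ denote the order of $[\theta]$ in $\out(\Gamma)$, so that $\theta^k=\iota_{\gamma_0}$ for some $\gamma_0\in\Gamma$, and realize $\theta$ as conjugation $\theta(x)=\psi x\psi^{-1}$ by a homeomorphism $\psi$ of $\mathbb{R}$, as permitted by the Brin--Guzm\'an application of McCleary--Rubin recalled above. The relevant conjugacy invariant is $\sigma$ from Lemma~\ref{support}, which is well-defined on $\Gamma\subset\homeo(\mathbb{R})$ and is invariant under conjugation by any element of $\Gamma$. The trivial inclusion $\supp(fg)\subset\supp(f)\cup\supp(g)$ gives $\sigma(fg)\ge\sigma(f)-\sigma(g)$, and since elements of $\Gamma$ are orientation-preserving, Lemma~\ref{iterates} gives $\sigma(h^k)=\sigma(h)$ for any non-identity $h\in\Gamma$. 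It therefore suffices to produce a sequence $h_m\in\fix(\theta)$ with $\sigma(h_m)\to\infty$: one then has $\sigma(h_m^k\gamma_0)\ge\sigma(h_m)-\sigma(\gamma_0)\to\infty$, so the $h_m^k\gamma_0$ represent infinitely many conjugacy classes of $\Gamma$, and Lemma~\ref{finiteorder} forces $R(\theta)=\infty$.

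The $h_m$ will be constructed by an averaging trick modelled on the proof of Theorem~\ref{main}. Suppose one has an interval $I=[a,b]$ with $a,b\in\mathbb{Z}[1/n]$ such that the iterates $I,\psi(I),\ldots,\psi^{k-1}(I)$ are pairwise disjoint and $I\cap\supp(\gamma_0)=\emptyset$. For each $m\ge 1$ it is routine to build $y_m\in\Gamma$ with $\supp(y_m)\subset I$ and $\sigma(y_m)=m$, by subdividing $I$ at $n$-adic break-points into $m$ subintervals and placing a non-trivial PL-bump from $F$ on each (choosing the break-points in $\Delta_n$ when $\Gamma=F_{n,\infty}$). Set $h_m:=y_m\cdot\theta(y_m)\cdots\theta^{k-1}(y_m)$: the $k$ factors have pairwise disjoint supports $\psi^j(\supp(y_m))$ and therefore commute, while $\supp(y_m)\cap\supp(\gamma_0)=\emptyset$ forces $\theta^k(y_m)=\gamma_0 y_m\gamma_0^{-1}=y_m$. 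A short calculation combining these two facts yields $\theta(h_m)=h_m$, and $\supp(h_m)=\bigsqcup_{j=0}^{k-1}\psi^j(\supp(y_m))$ gives $\sigma(h_m)=km$.

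The hard part will be the dynamical argument producing the interval $I$. Because $\theta^k=\iota_{\gamma_0}$, the homeomorphism $\gamma_0^{-1}\psi^k$ centralizes $\Gamma$ in $\homeo(\mathbb{R})$, so $\psi$ is essentially a $k$-th root of $\gamma_0$ and its orbit structure is tied to that of $\gamma_0$. Since an orientation-preserving homeomorphism of $\mathbb{R}$ has no non-trivial finite orbit, and an orientation-reversing $\psi$ forces $k$ to be even with $\psi^2$ orientation-preserving, I expect that provided $\supp(\gamma_0)\ne\mathbb{R}$ one can find, near a fixed point of $\gamma_0$, a small $n$-adic interval $I$ whose $\psi$-iterates are pairwise disjoint and avoid $\supp(\gamma_0)$. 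If $\supp(\gamma_0)=\mathbb{R}$ I would first replace $\theta$ by the inner-equivalent automorphism $\theta\circ\iota_\delta$, which does not affect $R(\theta)$; a direct computation gives $(\theta\iota_\delta)^k=\iota_{\gamma_0'}$ with $\gamma_0'=\theta(\delta)\theta^2(\delta)\cdots\theta^{k-1}(\delta)\cdot\gamma_0\delta\gamma_0^{-1}\cdot\gamma_0$, and the freedom to prescribe the asymptotic translation $\tau(\delta)$ should allow $\gamma_0'$ to be arranged with a fixed point, reducing to the previous case.
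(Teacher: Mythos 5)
Your overall strategy---realize $\theta$ as conjugation by some $\psi\in\homeo(\mathbb{R})$ and feed elements of $\fix(\theta)$ with unbounded $\sigma$ into Lemma~\ref{finiteorder}---is the same as the paper's, but your central construction cannot be carried out. You ask for an interval $I$ such that $I,\psi(I),\dots,\psi^{k-1}(I)$ are pairwise disjoint and such that $\supp(y_m)\subset I$ avoids $\supp(\gamma_0)$, the latter forcing $\theta^k(y_m)=y_m$ and hence $\psi^k(\supp(y_m))=\supp(y_m)$. Pick a component $J$ of $\supp(y_m)$ and iterate: since $\psi^k$ permutes the finitely many components, you obtain $N=lk\ge k$ pairwise disjoint nonempty open intervals $J,\psi(J),\dots,\psi^{N-1}(J)$ on which $\psi$ acts as an $N$-cycle with $\psi^N(J)=J$. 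But a homeomorphism of $\mathbb{R}$ is monotone, so it induces an order-preserving or order-reversing permutation of any finite family of pairwise disjoint intervals that it permutes; an order-preserving permutation of a finite chain is the identity and an order-reversing one is an involution, so $N\le 2$. Hence your $I$ can exist only when $k\le 2$, and for $k=2$ only when $\psi$ reverses orientation---essentially the case already treated in Theorem~\ref{main}. These excluded cases genuinely occur: by Theorem~\ref{brin}(i), $\out(F)$ contains $\out^+(F)\cong T\times T$, which has elements of order $3$ represented by orientation-preserving homeomorphisms. (This also shows the heuristic ``take $I$ near a fixed point of $\gamma_0$'' cannot be repaired: an orientation-preserving homeomorphism of $\mathbb{R}$ has no periodic orbit of period $>1$.)

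The dense case is a second gap. If $\supp(\gamma_0)=\mathbb{R}$ no admissible $I$ exists at all, and your fix---replace $\theta$ by $\theta\iota_\delta$ and tune $\tau(\delta)$ so that $\gamma_0'$ acquires a fixed point---is not justified: $\tau(\gamma_0')=\sum_{1\le j\le k-1}\tau(\theta^j(\delta))+\tau(\delta)+\tau(\gamma_0)$, and you know neither how $\theta$ acts on the translation at $+\infty$ nor that the resulting congruence (roughly, $\tau(\gamma_0)\in k\mathbb{Z}$ when $\tau\circ\theta=\tau$) is solvable. The paper avoids both difficulties. It first shows the centralizer of $\Gamma$ in $\homeo(\mathbb{R})$ is trivial, so $\psi^k=\gamma_0$ exactly and in particular $\gamma_0\in\fix(\theta)$. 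If $\supp(\psi)=\supp(\gamma_0)$ is not dense, it takes $y_m$ supported in the \emph{complement of} $\supp(\psi)$, where $\theta(y_m)=y_m$ outright and no averaging product is needed; if it is dense, it uses the elements $\gamma_0^{kj+1}=(\gamma_0^{j})^k\gamma_0$, which lie in $\{x^k\gamma_0\mid x\in\fix(\theta)\}$ and are pairwise non-conjugate because the conjugacy invariant $\tau$ takes the distinct values $(kj+1)\tau(\gamma_0)$ with $\tau(\gamma_0)\ne 0$. A minor further point: your inequality $\sigma(fg)\ge\sigma(f)-\sigma(g)$ is false in general (a single support interval of $g$ can merge many components of $\supp(f)$), though it is harmless where you invoke it, since there the supports are disjoint.
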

\begin{proof} Let $o([\theta])=m$. There exists an  $f\in\homeo(\mathbb{R})$ such that $\theta(h)=f h f^{-1}$ for all $h\in \Gamma$. (Cf. \cite[Theorem 1.2.4]{BrinGuzman}, \cite{MR}.)  Since $[\theta]^m=1$ we see that there exists a $\gamma \in \Gamma$ such that $\theta^m(h)=f^m hf^{-m}=\gamma h\gamma^{-1} $ for all $h\in \Gamma$.  

Therefore, setting $g:=\gamma^{-1}f^m\in \homeo(\mathbb{R})$ and  $ghg^{-1}=h $ for all $h\in \Gamma$.   We claim that $g=1$.  To see this, assume that $g\ne 1$ and  choose an interval $U\subset \mathbb{R}$ such that $g(U)\cap U=\emptyset$.  Now let $h\in \Gamma$ be any non-trivial element supported in $U$.  Then $ghg^{-1}$ is supported in $g(U)$. 
This shows that $ghg^{-1}\ne h$, a contradiction.   Hence we conclude that $g=1$ and so $\gamma=f^m$.  In particular $\gamma^k\in \fix(\theta)$ for all $k\in \mathbb{Z}$.

We may assume that $\gamma\ne 1$.  (Otherwise $f\in \homeo(\mathbb{R})$ has order $m$. Hence $m=2$, $\theta(p)=p$ for some $p\in \mathbb{R}$  
and $\theta$ interchanges the intervals $(-\infty, p)$ and $(p,\infty)$. We proceed as 
in the proof of Theorem \ref{main} to see that $R(\theta)=\infty$.)

Since $f^m=\gamma\in \Gamma$ and $\gamma\ne 1$, we have that $\supp(f)=\supp(\gamma)$ equals $\mathbb{R}$ or is a  union of {\it finitely} many open intervals.  In particular $\sigma(\gamma)<\infty$.  

If $\supp(f)$ is not dense, we merely choose elements 
$\gamma_k\in \Gamma$ such that $\supp(\gamma_k)$ has exactly $k$ components  
and $\supp(f)\cap \supp(\gamma_k)=\emptyset $ for all $k\ge 1$.  Then $\theta(\gamma_k)=\gamma_k$ and $\sigma(\gamma_k.\gamma)=k+\sigma(\gamma)$ for all $k$. 
It follows that $\{\gamma_k.\gamma\}_{k\ge 1}$ are in pairwise distinct 
conjugacy classes.

So assume that $\supp(f)=\supp(\gamma)$ is dense.  As remarked above, any element of $\Gamma$ has slope at $\infty$ equals $1$ and translation at infinity an integer, say, $b$.  So we have $\gamma(t)=t+b$ for $t>0$ 
large.  Since $\supp(\gamma)$ is dense, we have $b\ne 0$.

Suppose that $\gamma^r=z\gamma^sz^{-1}$ for some $z\in \Gamma$ where $r,s$ are non-zero integers. Applying $\tau$ we obtain 
$rb=\lambda(z).sb=sb$ as $\lambda(z)=1$.  Hence $r=s$.    
This shows that the elements of $\{\gamma^{mk+1}\mid k\in \mathbb{N}\}$ are in pairwise 
distinct conjugacy classes of $\Gamma$. By Lemma \ref{finiteorder} 
we conclude that $R(\theta)=\infty$.
\end{proof}

\begin{remark}
(i) In the case of the generalized Thompson groups $T_{n,r}$, suppose that $\theta\in \aut(T_{n,r})$ 
represents a torsion element, say of order $m$, in $\out(T_{n,r})$ and that $\theta(x)=fxf^{-1}$ with $f\in \homeo(\mathbb{R}/r\mathbb{Z})$.  Suppose $f^m=\gamma\in T_{n,r}$.  If $\gamma=1$ 
our method of proof of Theorem \ref{main} can be applied to show that $R(\theta)=\infty$. 
However, when $\gamma\ne 1$, it is not clear to us how to find elements of $\fix(\theta)$ 
satisfying the hypotheses of Lemma \ref{finiteorder}.  

(ii)  Our approach yields no information about automorphisms which represent 
non-torsion elements in the outer automorphism group. 

\end{remark}

\end{document}